\newcommand{\Cr}{\lambda}
\newcommand{\Lip}{\text{Lip}}
\newcommand{\range}{\text{range}}
\newcommand{\one}{\mathds{1}}
\renewcommand{\H}{\mathcal{H}}
\newcommand{\B}{{\mathcal B}}
\renewcommand{\O}{{\mathcal O}}
\renewcommand{\bar}[1]{{\overline{#1}}}
\newcommand{\R}{\mathbb{R}}
\newcommand{\eps}{\varepsilon}
\renewcommand{\phi}{\varphi}
\def\XXint#1#2#3{{\setbox0=\hbox{$#1{#2#3}{\int}$ }
\vcenter{\hbox{$#2#3$ }}\kern-.6\wd0}}
\newtheorem{theorem}{Theorem}
\newtheorem{lemma}[theorem]{Lemma}
\theoremstyle{definition}
\newtheorem{remark}[theorem]{Remark}
\numberwithin{equation}{section}
\numberwithin{theorem}{section}
\begin{document} 

\title{Asymptotically optimal strategies for online prediction with history-dependent experts}
\author{Jeff Calder\thanks{{\bf Funding:} Jeff Calder was supported by NSF-DMS grant 1944925 and the Alfred P. Sloan foundation.} }
\author{Nadejda Drenska}
\affil{School of Mathematics \\ University of Minnesota\thanks{{\bf Emails} \textit{\{jcalder,ndrenska\}@umn.edu}}}

\date{}

\maketitle

\begin{abstract}
We establish sharp asymptotically optimal strategies for the problem of online prediction with \emph{history dependent experts}. The prediction problem is played (in part) over a discrete graph called the $d$ dimensional \emph{de Bruijn graph}, where $d$ is the number of days of history used by the experts. Previous work \cite{drenska2019PDE} established $O(\eps)$ optimal strategies for $n=2$ experts and $d\leq 4$ days of history, while \cite{drenska2020Online} established $O(\eps^{1/3})$ optimal strategies for all $n\geq 2$ and all $d\geq 1$, where the game is played for $N$ steps and $\eps=N^{-1/2}$. In this paper, we show that the optimality conditions over the de Bruijn graph correspond to a graph Poisson equation, and we establish $O(\eps)$ optimal strategies for all values of $n$ and $d$.
\end{abstract}

\section{Introduction}

\emph{Prediction with expert advice} refers to problems in online machine learning \cite{CBL} where a player synthesizes advice from many experts to make predictions in real-time, often against an adversarial environment. The seminal work in the field is due to Cover \cite{cover1966behavior} and Hannan \cite{Hannan}, and since then, the field has grown substantially. We refer to \cite{ABG, CBL, GPS, HKW, LW, CFH, Ro} for effective algorithms that work well in practice, but may not be provably optimal, and to recent work \cite{drenska2020Online,drenska2017pde,drenska2020prediction,drenska2019PDE,  kobzar2020new1,   kobzar2020new2, ABG, GPS, Bayraktar} that has started to address asymptotic optimality in a rigorous way. Applications of prediction from expert advice include algorithm boosting \cite{FS}, stock price prediction and portfolio optimization \cite{FS}, self-driving car software \cite{AKT}, and many other problems.

In this paper, we study the problem of \emph{prediction of a binary sequence} with history dependent experts, which was studied recently in \cite{drenska2020Online,drenska2019PDE}, and is a generalization of Cover's original work \cite{cover1966behavior}. In this problem, we have a stream of binary data $b_1,b_2,b_3,\dots$ with $b_i\in \B:=\{-1,1\}$ and $n$ experts making predictions about $b_i$. We view the problem as a \emph{stock prediction problem}, with $b_i$ representing the increase or decrease of the price of a stock each day. The $n$ experts make their predictions using the previous $d$ days of market history
\begin{equation}\label{eq:mk}
m^i:=(b_{i-d},b_{i-d+1},\dots,b_{i-1}) \in \B^d.
\end{equation}
We denote the expert predictions by
\begin{equation}\label{eq:experts}
q_1,\dots,q_n:\B^d\to [-1,1],
\end{equation}
where $q_j(m)$ represents the prediction of expert $j$ given stock price history $m\in \B^d$. We write $q(m) = (q_1(m),q_2(m),\dots,q_n(m))$ for convenience. The investor's goal is to combine the expert advice to make their own prediction $f_i\in [-1,1]$, and the investor gains $b_if_i$ on day $i$, while the $j^{\rm th}$ expert would gain $b_iq_j(m^i)$ on day $i$, were they to invest their prediction.

The investor's performance is measured by their regret with respect to each expert, which is the difference between the gains of the expert and those of the player. Thus, on day $i$, the investor accumulates regret of $b_i(q_j(m^i)-f_i)$ with respect to expert $j$. After playing the game for $N$ days, the investor's final regret is evaluated with a payoff function $g:\R^n\to \R$, which is commonly taken as the regret with respect to the best performing expert, i.e., $g(x)=\max\{x_1,\dots,x_n\}$.  We assume there is an adversarial market controlling the binary data stream. The market observes the investor's choice $f_i$ before deciding on $b_i$, and both players have full knowledge of the expert strategies. The market's goal is to maximize the payoff, while the investor's goal is to minimize the payoff, yielding a two-player zero-sum game.

The possible transitions for the stock history $m^i$ are described by a directed graph with nodes $\B^d$ called the $d$-dimensional \emph{de Bruijn graph} over $2$ symbols. To describe the graph, we introduce the notation $m|b=(m_2,\dots,m_d,b)$ for concatenation of symbols, and use the short form $m_+=m|1$ and $m_-=m|-1$. With this notation, the stock history $m^i$ satisfies $m^{i+1}=m^i|b_i$. The de Bruijn graph has node set $\B^d$, and directed edges from $m$ to $m_+$ and from $m$ to $m_-$ for each $m\in \B^d$. The graph is depicted for $d=3$ in Figure \ref{fig:graph} with $0$ replacing $-1$ for convenience. The two-player game is played (in part) over the discrete de Bruijn graph, and this must be accounted for in the analysis of optimal strategies.\\
\begin{figure}
\begin{center}
\begin{tikzpicture}[scale=1.5]
\node at (-1,0) {010};
\node at (1,0) {101};
\node at (-3,0) {000};
\node at (3,0) {111};
\node at (-2,-1) {100};
\node at (-2,1) {001};
\node at (2,-1) {110};
\node at (2,1) {011};
\draw[thick,->] (-3.2,0.15) arc (25:335:0.3);
\draw[thick,->] (3.2,-0.15) arc (-180+25:180-25:0.3);
\draw[thick,->] (-2.8,0.1)--(-2.2,0.85);
\draw[thick,<-] (-2,0.85)--(-2,-0.85);
\draw[thick,->] (-1.8,0.85)--(-1.2,0.1);
\draw[thick,<-] (-1.8,-0.85)--(-1.2,-0.1);
\draw[thick,<-] (-2.8,-0.1)--(-2.2,-0.85);
\draw[thick,<-] (2.8,0.1)--(2.2,0.85);
\draw[thick,->] (2.8,-0.1)--(2.2,-0.85);
\draw[thick,->] (2,0.85)--(2,-0.85);
\draw[thick,<-] (1.8,0.85)--(1.2,0.1);
\draw[thick,->] (1.8,-0.85)--(1.2,-0.1);
\draw[thick,->] (-1.8,1)--(1.8,1);
\draw[thick,<-] (-1.8,-1)--(1.8,-1);
\draw[thick,->] (0.85,0.15) arc (90-25:90+25:2);
\draw[thick,->] (-0.85,-0.15) arc (270-25:270+25:2);
\end{tikzpicture}
\end{center}
\caption{The de Bruijn Graph, $d=3$}
\label{fig:graph}
\end{figure}
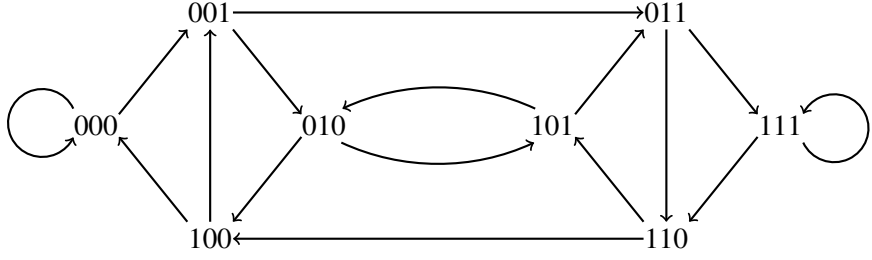
\indent We are interested in asymptotically optimal strategies for the investor and market when the game is played over a large number of turns $N$. In \cite{drenska2019PDE}, the second author and R.V.~Kohn studied investor strategies of the form
\begin{equation}\label{eq:ansatz}
f_i^* = \frac{\langle \nabla u,q(m)\rangle}{\langle \nabla u,\one\rangle} + \frac{\eps f_i^\#(m)}{2\langle \nabla u,\one\rangle},
\end{equation}
where $u(x,t)$ is the solution of a continuum PDE (see Eq.~\eqref{eq:PDE}), $\one$ is the ones vector, and $\eps=N^{-1/2}$. The first term in $f_i^*$ is a weighted average of the expert predictions, and arises from standard arguments based on Taylor expansions. The second term involving $f^\#_i(m)$ is more interesting, and accounts for the fluctuations in the strategy and value function over the de Bruijn graph. The authors of \cite{drenska2019PDE} showed that $f^\#_i(m)$ should be chosen by solving linear programs for the market and investor over the de Bruijn graph. When the optimal values of the linear programs for the market and investor agree, the strategy is provably $O(\eps)$ optimal. The authors of \cite{drenska2019PDE} solved these linear programs explicitly for $n=2$ experts and $d\leq 4$ days of history, and showed that their values agreed, leading to asymptotically optimal strategies in these cases. For $n=2$ and $d>4$, the authors gave upper and lower bounds for the value function.  A recent follow-up paper \cite{drenska2020Online} took a completely different approach, and established $O(\eps^{1/3})$ asymptotically optimal strategies for all $n\geq 2$ and all $d\geq 1$. These strategies have the form \eqref{eq:ansatz}, except that the correctors $f^\#_i$ grow to $\infty$ as $N\to \infty$, which is connected to the worse $O(\eps^{1/3})$ optimality. 

In this paper, we fully resolve the problems raised in \cite{drenska2019PDE}, and show in general that the correctors $f^\#_i$ are given by 
\begin{equation}\label{eq:inv_opt}
f^{\#}_i(m)=\H(m_+) - \H(m_-),
\end{equation}
where $\H:\B^d\to \R$ is the solution of a Poisson equation over the de Bruijn graph (for precise details, we refer the reader to Section \ref{sec:main}). The corrector \eqref{eq:inv_opt} appears in \cite[Eq.~(1.15)]{drenska2019PDE} as part of a far more complicated investor strategy, which is what led us to this solution. We show in this paper that the strategy \eqref{eq:ansatz} with $f^\#_i$ given by \eqref{eq:inv_opt} is $O(\eps)$ optimal as $N\to \infty$. The corresponding asymptotically optimal market strategy simply penalizes the investor when they deviate from $f^*$, that is, $b^*_i=\text{sign}(f_i^* - f_i)$. In Section \ref{sec:LP} we show that the correctors \eqref{eq:inv_opt} solve the linear programs identified in \cite{drenska2019PDE}, and we show how to solve the linear programs on more general directed graphs. This means that, in principle, we could invoke the proofs from \cite{drenska2019PDE} to establish our main result. However, our new insights connecting $f^\#_i$ to a graph Poisson equation lead to much simpler proofs of optimality.  

We briefly mention that there are many other cases in the PDE literature where scaling limits of two-player games lead to elliptic or parabolic PDEs (see, e.g., \cite{KS1,KS2,PS1,PS2,AS1,NS,APSS,LM,calder2020convex}). To our knowledge, this is the first case where the limiting equation is a PDE in continuous variables $(x,t)$ coupled to a PDE on a graph (the de Bruijn graph). The ideas in our paper are simple and we expect they may apply to other straightforward generalizations of this setup. We leave this to future work. We describe our results and give the proofs in Section \ref{sec:main}, and discuss the connections to the market and investor linear programs from \cite{drenska2019PDE}, and their generalizations to other directed graphs, in Section \ref{sec:LP}.

\section{Asymptotically optimal strategies}
\label{sec:main}

Our main result concerns the asymptotic behavior of the \emph{value function} $V_N(x,\ell;m)$, which represents the optimal value of the two-player game over $N$ steps, given the game starts on day $\ell\geq 1$ with regret $x\in \R^n$ and market history $m\in \B^d$. The value function is given by $V_N(x,N;m)=g(x)$ and 
\begin{equation}\label{eq:valuedef}
V_N(x,\ell;m) = \min_{|f_{\ell}|\leq 1}\max_{b_{\ell}=\pm 1}\cdots \min_{|f_{N-1}|\leq 1}\max_{b_{N-1}=\pm 1}g\left( x + \sum_{i=\ell}^{N-1} b_i(q(m^i) - f_i\one) \right),
\end{equation}
for $1 \leq \ell \leq N-1$, where $m^\ell = m$ and $m^{i+1}=m^i|b_i$ for $i=\ell,\dots,N-1$.  We assume $g\in C^4(\R^n)$ with uniformly bounded derivatives of order up to $4$ over $\R^n$,  there exists $\theta>0$ such that
\begin{equation}\label{eq:gone}
\nabla g(x) \cdot \one \geq \theta \ \ \text{ for all }x\in \R^n,
\end{equation}
and that $g$ is positively $1$-homogeneous, that is
\begin{equation}\label{eq:homogeneous}
g(sx) = sg(x) \ \ \text{for all }x\in \R^n, s>0.
\end{equation}
We also assume the expert strategies $q=(q_1,\dots,q_n)$ satisfy 
\begin{equation}\label{eq:expert}
q:\B^d \to [-\mu,\mu]^n \ \ \text{ for some }\mu \in (0,1).
\end{equation}

To understand the long time behavior of the value function, we define the parabolic rescaled version
\begin{equation}\label{eq:uN}
u_N(x,t;m) := \frac{1}{\sqrt{N}}V_N(\sqrt{N}x,\lceil Nt\rceil;m),
\end{equation}
where $\lceil t\rceil$ denotes the smallest integer larger than or equal to $t$. We also set $\eps=N^{-1/2}$ throughout the paper. The rescaled value function $u_N$ satisfies the dynamic programming principle (see \cite[Proposition 2.3]{drenska2020Online})
\begin{equation}\label{eq:dpp}
u_N(x,t;m) = \min_{|f|\leq 1}\max_{b=\pm 1}u_N(x + \eps b(q(m)-\one f),t+\eps^2;m|b).
\end{equation}
Under the assumptions above, it was shown in \cite{drenska2020Online} that $u_N$ converges uniformly as $N\to \infty$ at a rate of $\O(\eps^{1/3})$ to the linear growth solution of the diffusion equation
\begin{equation}\label{eq:PDE}
\left\{\begin{aligned}
u_t + \frac{1}{2^{d+1}}\sum_{\eta\in Q(\nabla u)} \langle \nabla^2 u\,\eta,\eta\rangle &= 0,&&\text{in }\R^n\times (0,1)\\ 
u &=g,&&\text{on }\R^n\times \{t=1\},
\end{aligned}\right.
\end{equation}
where
\begin{equation}\label{eq:Qdef}
Q(\nabla u)=\left\{q(m) - \frac{\langle \nabla u, q(m)\rangle}{\langle \nabla u, \one\rangle}\one \, : \, m\in \B^d\right\}.
\end{equation}
The same convergence result was established earlier in \cite{drenska2019PDE} for $n=2$ and $d\leq 4$ with the sharper $O(\eps)$ rate. The diffusion equation \eqref{eq:PDE} describes the limiting behavior of the rescaled value function $u_N$ and encodes information about asymptotically optimal strategies. 

The equation \eqref{eq:PDE} is a degenerate diffusion equation, since $Q(\nabla u)\subset \nabla u^\perp$. Here $\nabla u^\perp$ is the set of vectors, whose inner product with $\nabla u$ is $0$. Normally, solutions of such equations are not classical and must be interpreted in the viscosity sense. In this case, there is a hidden geometric structure in the equation that allows us to show (see \cite[Theorem 4.14]{drenska2020Online}) that the solution $u(x,t)$ is classical, and furthermore 
\begin{equation}\label{eq:uclassical}
\left\{\begin{aligned}
&\text{All spatial derivatives of $u$ up to order 4, and time derivatives}\\
&\text{up to order $2$ are uniformly bounded on $\R^n\times [0,1]$.}
\end{aligned}\right.
\end{equation}
The solution $u$ also has linear growth as $|x|\to \infty$ and satisfies (see \cite[Theorem 4.4]{drenska2020Online})
\begin{equation}\label{eq:uone}
\nabla u(x,t) \cdot \one \geq \theta \ \ \text{ for all }(x,t)\in \R^n\times [0,1].
\end{equation}

To describe the optimal investor strategy, we define
\[\xi(x,t;m) = q(m) - \frac{\langle \nabla u(x,t),q(m)\rangle}{\langle \nabla u(x,t),\one\rangle}\one,\]
and
\begin{equation}\label{eq:h}
h(x,t;m) =\frac{1}{2}\langle \nabla^2u(x,t)\, \xi(x,t;m),\xi(x,t;m)\rangle.
\end{equation}
We define the function $\H(x,t;m)$ by 
\begin{equation}\label{eq:H}
\H(x,t;m) = h(x,t;m) + \sum_{\ell=1}^{d-1} \frac{1}{2^{\ell}}\sum_{s\in \B^\ell}h(x,t;m|s), 
\end{equation}
where the notation $m|s$ for $m\in \B^d$ and $s\in \B^j$ refers to concatenation of symbols, defined by 
\[m|s = (m_{j+1},m_{j+2},\dots,m_d,s_1,s_2,\dots,s_j),\]
when $j< d$, and 
\[m|s = (s_{j-d+1},s_{j-d+2},\dots,s_j).\]
when $j \geq d$. Notice that $m|s$ is the state on the de Bruijn graph that we arrive at by starting from $m\in \B^d$ and following the edges defined by $s_1,\dots,s_j$. Thus, $\H$ can be interpreted as a weighted average of $h$ over a de Bruijn tree rooted at $m$. We note that by \eqref{eq:uclassical}, $\H$ is uniformly Lipschitz continuous on $\R^n\times [0,1]$, for any $m\in \B^d$. That is, there exists a constant $C>0$, depending on $u$, $\theta$, and $n$, but independent of $d$, such that
\begin{equation}\label{eq:HLip}
|\H(x,t;m) - \H(y,s;m)| \leq C d(|x-y| + |s-t|)
\end{equation}
holds for all $(x,t),(y,s)\in \R^n\times [0,1]$. 

Let us focus on the asymptotically optimal strategies for the investor and for the market.
The $O(\eps)$ asymptotically optimal investor strategy is given by
\begin{equation}\label{eq:fstar}
f^*(x,t;m) = \frac{\langle \nabla u(x,t),q(m)\rangle}{\langle \nabla u(x,t),\one\rangle } + \frac{\eps}{2}\left( \frac{\H(x,t;m_+) - \H(x,t;m_-)}{\langle \nabla u(x,t),\one\rangle} \right),
\end{equation}
and the asymptotically optimal market strategy is given by
\begin{equation}\label{eq:bstar}
b^*(x,t;m,f) = 
\begin{cases}
1,&\text{if }f \leq f^*(x,t;m)\\
-1,&\text{if }f > f^*(x,t;m).
\end{cases}
\end{equation}
That is, the market simply penalizes the player for deviating from $f^*$.

Using the strategies $f^*$ and $b^*$ defined above, we prove the following theorem.
\begin{theorem}\label{thm:main}
Let $g\in C^4(\R^n)$ with uniformly bounded derivatives of order up to $4$, and assume \eqref{eq:gone}, \eqref{eq:homogeneous}, and \eqref{eq:expert} hold. Let $u$ be the solution of \eqref{eq:PDE}. Then there exists $C_1,C_2>0$, depending on $u$, $n$ and $\theta$, such that  
\begin{equation}\label{eq:rate}
|u_N(x,t;m) - u(x,t)| \leq C_1d(1-t+\eps)\eps
\end{equation}
holds for all $N\geq C_2d^2/\mu^2$, $(x,t)\in \R^n\times [0,1]$ and $m\in \B^d$, where $\eps=N^{-1/2}$.
\end{theorem}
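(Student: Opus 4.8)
\emph{Proof proposal.} The plan is to prove \eqref{eq:rate} by a comparison (stability) argument for the dynamic programming principle \eqref{eq:dpp}: we exhibit a function that solves \eqref{eq:dpp} up to a small per-step error and matches the terminal data up to a small error, and then use that the one-step operator $\min_{|f|\le1}\max_{b=\pm1}$ is monotone and nonexpansive in the sup norm of the future values, so that the errors simply accumulate. Guided by the corrector \eqref{eq:inv_opt}, the candidate is
\[ w(x,t;m) := u(x,t) + \eps^2\,\H(x,t;m). \]
The crux is the identity
\[ \H(x,t;m) - \tfrac12\big(\H(x,t;m_+) + \H(x,t;m_-)\big) = h(x,t;m) + u_t(x,t), \]
which I would obtain by inserting the definition \eqref{eq:H}: since following the edge $b$ and then the string $s$ from $m$ is the same as following the string $(b,s)$, averaging \eqref{eq:H} over the two children telescopes the geometric sums (the weight $2^{-\ell}$ exactly absorbs the $2^\ell$ strings $s\in\B^\ell$), leaving $\H(m) - \tfrac12(\H(m_+)+\H(m_-)) = h(m) - 2^{-d}\sum_{s\in\B^d}h(s)$; the diffusion equation \eqref{eq:PDE} together with \eqref{eq:Qdef} reads $u_t = -2^{-d}\sum_{m'\in\B^d}h(\cdot;m')$, giving the claimed identity. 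Read on the de Bruijn graph this says $\H$ solves the graph Poisson equation $-\Delta\H = h - \bar h$ with $\bar h = -u_t$ the uniform average of $h$ over $\B^d$; this is the ``graph Poisson equation'' announced in the introduction, and it is precisely what cancels the graph fluctuations.

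Next I would expand $w\big(x + \eps b(q(m) - \one f),\, t+\eps^2;\, m|b\big)$ in powers of $\eps$ — to third order on the smooth part $u$ (justified by \eqref{eq:uclassical}) and only to first order on the term $\eps^2\H$ (justified by the Lipschitz bound \eqref{eq:HLip}, at the price of an $O(d\eps^3)$ error) — and evaluate at the investor's choice $f=f^*(x,t;m)$ from \eqref{eq:fstar}. The $\eps^1$ term vanishes identically because $\langle\nabla u,\xi(\cdot;m)\rangle=0$; the leftover $b$-dependent $\eps^2$ term is exactly $-\tfrac{b}{2}\big(\H(m_+)-\H(m_-)\big)\eps^2$ (this is where the $\eps$-correction in $f^*$ is used); and the curvature term is $\eps^2 h(x,t;m)+O(\eps^3)$. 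Adding the $\eps^2\H(m|b)$ term and invoking the Poisson identity shows that \emph{both} $b=\pm1$ give $w(x,t;m)+O(d\eps^3)$, hence $\max_b w\big(x+\eps b(q-\one f^*),t+\eps^2;m|b\big)\le w(x,t;m)+Cd\eps^3$; choosing $f=f^*$ in the minimum gives the ``$\le$'' half of the one-step DPP for $w$. For the ``$\ge$'' half I would use that, once $N\ge C_2 d^2/\mu^2$, the map $f\mapsto w\big(x+\eps b(q-\one f),t+\eps^2;m|b\big)$ is strictly decreasing for $b=1$ and strictly increasing for $b=-1$, with difference quotient of magnitude at least $\eps\theta/2$ (using \eqref{eq:uone} and \eqref{eq:HLip}); hence the market response $b^*$ of \eqref{eq:bstar} always selects the branch whose value at $f$ dominates its value at $f^*$, so $\max_b w(\cdots)\ge w(x,t;m)-Cd\eps^3$ for \emph{every} admissible $f$. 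Together these make $w$ an approximate fixed point of \eqref{eq:dpp} with uniform one-step error $Cd\eps^3$.

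The estimate is then closed by iterating backward over the $\lceil N(1-t)\rceil$ layers: since $u_N$ satisfies \eqref{eq:dpp} exactly and the operator is monotone and nonexpansive in the sup norm over future values, the per-step errors add, and together with the terminal discrepancy $|u_N(x,1;m)-w(x,1;m)| = \eps^2|\H(x,1;m)|\le Cd\eps^2$ one obtains $|u_N-w|\le Cd(1-t)\eps + Cd\eps^2$, whence $|u_N-u|\le|u_N-w|+\eps^2|\H|\le C_1 d(1-t+\eps)\eps$ with $C_1$ independent of $d$ and $\mu$. I expect the main difficulty to be quantitative bookkeeping rather than a conceptual obstruction: one must track the $d$- and $\mu$-dependence through the Taylor remainders so the one-step error is genuinely $O(d\eps^3)$ with a $d$-independent constant (in particular controlling the $\eps^2\H$ term only through its Lipschitz constant, since $\H$ need not be twice differentiable in $(x,t)$), verify that $|f^*|\le1$ and the strict monotonicity of the two branches both hold as soon as $N\ge C_2 d^2/\mu^2$, and treat the ceiling function in \eqref{eq:uN}--\eqref{eq:dpp} carefully in the terminal layer.
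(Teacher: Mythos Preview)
Your proposal is correct and follows essentially the same route as the paper. The paper proves the same graph Poisson identity for $\H$ (Lemma~\ref{lem:H}), carries out the same one-step Taylor computation with $f=f^*$ (Lemma~\ref{lem:investor}), and for the market side uses the same $b^*$ together with the monotonicity of $u$ in the $\one$-direction (Lemma~\ref{lem:market}); it then closes by an explicit induction on $u_N-u$ with the bound $(\H(m)-\H^{\pm})\eps^2+Kdk\eps^3$, which is exactly your nonexpansive iteration written out, since your candidate $w=u+\eps^2\H$ is just the comparison function obtained by moving the $\H$-term across. The only cosmetic difference is on the market side: the paper compares $u$ (not $w$) at $f$ and $f^*$ via $u(x+s\one,t)\ge u(x,t)$ and handles $\H$ separately through its Lipschitz constant, whereas you argue monotonicity of each branch of $w$ directly; both work under the same threshold $N\ge C_2 d^2/\mu^2$.
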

The proof of Theorem \ref{thm:main}, which is given in Section \ref{sec:proofs}, employs the strategy $f^*$ for the investor, while allowing the market to play optimally to obtain one direction of the rate. The other direction is obtained by playing the strategy $b^*$ for the market, and allowing the player to play optimally. This establishes that both the market and investor strategies $b^*$ and $f^*$, respectively, are $O(\eps)$ asymptotically optimal as $\eps\to 0$.

\begin{remark}
The $1$-homogeneity of $g$ is only used to show that $u_N(x,1)=g(x)$, and can be omitted if $u_N$ is defined instead by the rescaled value function definition
\[u_N(x,t;m) = \min_{|f_{\lceil Nt\rceil}|\leq 1}\max_{b_{\lceil Nt\rceil}=\pm 1}\cdots \min_{|f_{N-1}|\leq 1}\max_{b_{N-1}=\pm 1}g\left( x + \eps\sum_{i=\ell}^{N-1} b_i(q(m^i) - f_i\one) \right).\]
\label{rem:homo}
\end{remark}

\subsection{Proof of Theorem \ref{thm:main}}
\label{sec:proofs}

The proof of Theorem \ref{thm:main} is based on recognizing $\H$ as the solution of a graph Poisson equation over $\B^d$. We define the graph Laplacian $\Delta_{\B^d}$ on the de Bruijn graph by
\begin{equation}\label{eq:debruijnLap}
\Delta_{\B^d} v(m) = v(m) - \frac{1}{2}(v(m_+) + v(m_-))
\end{equation}
for any function $v:\B^d\to \R$. We have taken the convention from graph theory that $\Delta_{\B^d}$ is a positive definite operator, and hence has the opposite sign of the continuous Laplacian $\Delta u$. We also define the gradient $\nabla^b_{\B^d}$ on the de Bruijn graph by
\begin{equation}\label{eq:debruijnGrad}
\nabla_{\B^d}^b v(m) = v(m) - v(m|b).
\end{equation}
We write $\nabla_{\B^d}^\pm=\nabla_{\B^d}^{\pm 1}$ for convenience, and note that
\[\Delta_{\B^d} v(m) = \frac{1}{2}(\nabla_{\B^d}^+ v(m) + \nabla_{\B^d}^- v(m)).\]
We also denote the average of $v$ over the de Bruijn graph by 
\begin{equation}\label{eq:havg}
(v)_{\B^d} = \frac{1}{2^d}\sum_{m\in \B^d}v(m).
\end{equation}
In this notation, the PDE \eqref{eq:PDE} is equivalent to $u_t + (h)_{\B^d} = 0$, where we recall $h(x,t;m)$ is defined in \eqref{eq:h}. We will often drop the dependence on $(x,t)$ and $m$, when there is no confusion.

We now show that $\H$ solves a graph Poisson equation.
\begin{lemma}\label{lem:H}
For all $(x,t)\in \R^n\times [0,1]$ it holds that
\begin{equation}\label{eq:Poisson}
\Delta_{\B^d}\H = h - (h)_{\B^d} \ \ \text{ on }\B^d.
\end{equation}
Furthermore, $\H$ is the unique solution of \eqref{eq:Poisson}, up to a constant.
\end{lemma}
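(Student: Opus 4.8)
The plan is to prove \eqref{eq:Poisson} by a direct computation from the definition \eqref{eq:H}, and then to settle uniqueness up to a constant via a discrete maximum principle on the finite graph $\B^d$.

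For the first part, I would expand
\[\Delta_{\B^d}\H(m) = \H(m) - \tfrac12\bigl(\H(m_+) + \H(m_-)\bigr)\]
using \eqref{eq:H}. The one identity that does all the work is the associativity of concatenation on the de Bruijn graph: $(m|b)|s = m|(b,s)$ for $b\in\B$ and $s\in\B^\ell$ with $1\le \ell\le d-1$, which one checks by unwinding the two cases of the concatenation convention. Consequently the de Bruijn tree rooted at $m_+$ (resp. $m_-$) is exactly the subtree of branches out of $m$ whose first symbol is $+1$ (resp. $-1$), and
\[\tfrac12\bigl(\H(m_+)+\H(m_-)\bigr) = \tfrac12\sum_{t\in\B}h(m|t) + \sum_{\ell=1}^{d-1}\frac{1}{2^{\ell+1}}\sum_{b\in\B}\sum_{s\in\B^{\ell}}h(m|(b,s)) = \sum_{k=1}^{d}\frac{1}{2^{k}}\sum_{t\in\B^{k}}h(m|t),\]
where the last equality is just the reindexing $k=\ell+1$ together with $(b,s)\leftrightarrow t\in\B^{k}$. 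Subtracting this from $\H(m) = h(m) + \sum_{\ell=1}^{d-1}2^{-\ell}\sum_{s\in\B^\ell}h(m|s)$, the bulk of the two sums (the terms with $1\le \ell = k\le d-1$) cancels and we are left with $h(m) - 2^{-d}\sum_{t\in\B^d}h(m|t)$. Finally, the $j\ge d$ case of the concatenation rule gives $m|t = t$ for every $t\in\B^d$, so $2^{-d}\sum_{t\in\B^d}h(m|t) = (h)_{\B^d}$, which is \eqref{eq:Poisson}. I expect the only place to stumble here is the bookkeeping with the concatenation conventions — verifying $(m_\pm)|s = m|(\pm1,s)$ and tracking how the index ranges shift under the reindexing; the algebra itself is mechanical, and the geometric weights $2^{-\ell}$ are tuned precisely so that this recursion closes up against the graph average.

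For uniqueness, I would show that $\ker\Delta_{\B^d}$ consists only of the constants. If $\Delta_{\B^d}v = 0$ then $v(m) = \tfrac12(v(m_+)+v(m_-))$ for every $m$; at a maximizer $m^\ast$ of $v$ this forces $v(m^\ast_+) = v(m^\ast_-) = v(m^\ast)$, and since the de Bruijn graph is strongly connected (from any node one reaches any prescribed node in $d$ steps by following the edges labelled by that node's symbols), $v$ is constant. Hence any two solutions of \eqref{eq:Poisson} differ by an additive constant, and existence is already furnished by the explicit formula \eqref{eq:H}. (Equivalently, since every node has weighted in-degree $1$ one has $\sum_{m}\Delta_{\B^d}v(m)=0$ for all $v$, so $\range\Delta_{\B^d}\subseteq\{v:(v)_{\B^d}=0\}$, with equality by rank--nullity since the nullity is $1$; the right-hand side $h-(h)_{\B^d}$ lies in this hyperplane, which re-derives existence.)
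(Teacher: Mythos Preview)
Your proposal is correct and follows essentially the same route as the paper: a direct telescoping computation from the definition \eqref{eq:H} using the associativity $(m|b)|s=m|(b,s)$ and the identity $m|t=t$ for $t\in\B^d$, followed by a maximum-principle argument on the strongly connected de Bruijn graph to show $\ker\Delta_{\B^d}$ is the constants. The only cosmetic difference is that you first assemble $\tfrac12(\H(m_+)+\H(m_-))$ and then subtract, while the paper writes out $\Delta_{\B^d}\H$ in one pass; your parenthetical rank--nullity remark is an additional observation not used in the paper's proof of this lemma (though it appears later in Section~\ref{sec:general}).
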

\begin{remark}
The equation \eqref{eq:Poisson} is a Poisson equation over the de Bruijn graph. The right hand side has mean value zero, which is a necessary and sufficient condition for the existence and uniqueness of a solution (up to a constant). In this light, the definition of $\H$ given in \eqref{eq:H} is a representation formula for the solution of the Poisson equation \eqref{eq:Poisson}, and can be thought of as convolution against the fundamental solution of Laplace's equation on the de Bruijn graph. Since the investor's strategy \eqref{eq:fstar} is the difference of $\H$ at $m_+$ and $m_-$, it is independent of the choice of constant in the solution of the Poisson equation \eqref{eq:Poisson}.
\label{rem:poisson}
\end{remark}
\begin{proof}
We compute
\begin{align*}
\Delta_{\B^d}\H(x,t;m)&=\frac{1}{2}\sum_{b\in \B}\left(\H(x,t;m) - \H(x,t;m|b)\right)\\
&= h(x,t;m) + \sum_{\ell=1}^{d-1} \frac{1}{2^{\ell}}\sum_{s\in \B^\ell}h(x,t;m|s)\\
&\hspace{1in}-\frac{1}{2}\sum_{b\in \B}h(x,t;m|b) -\frac{1}{2}\sum_{b\in \B}\sum_{\ell=1}^{d-1} \frac{1}{2^{\ell}}\sum_{s\in \B^\ell}h(x,t;m|b|s) \\
&= h(x,t;m) + \sum_{\ell=1}^{d-1} \frac{1}{2^{\ell}}\sum_{s\in \B^\ell}h(x,t;m|s) - \frac{1}{2}\sum_{\ell=0}^{d-1}\frac{1}{2^\ell}\sum_{s\in \B^{\ell+1}}h(x,t;m|s)\\
&=h(x,t;m) + \sum_{\ell=1}^{d-1} \frac{1}{2^{\ell}}\sum_{s\in \B^\ell}h(x,t;m|s) -\sum_{\ell=1}^{d}\frac{1}{2^{\ell}}\sum_{s\in \B^{\ell}}h(x,t;m|s) \\
&=h(x,t;m) -\frac{1}{2^d}\sum_{s\in \B^d}h(x,t;m|s).
\end{align*}
Since $m|s=s$ for $s\in \B^d$, we see that $\Delta_{\B^d}\H = h-(h)_{\B^d}$, as desired.

To see that $\H$ is unique, up to a constant, we use a maximum principle argument. Let $\bar{\H}:\B^d\to \R$ be any other solution of \eqref{eq:Poisson}, where $(x,t)$ is still fixed, and let $v = \bar{\H} - \H$. Then $\Delta_{\B^d} v(m)=0$ for all $m\in \B^d$. It follows that $v$ satisfies the mean value property
\begin{equation}\label{eq:MVP}
v(m) = \frac{1}{2}(v(m_+) + v(m_-))
\end{equation}
for all $m\in \B^d$. Let $m^*\in \B^d$ be any point where $v$ attains its maximum over $\B^d$. Then the mean value property \eqref{eq:MVP} implies that $v(m^*)=v(m^*_+) = v(m^*_-)$. Applying the same argument at $m^*_+$ and $m^*_-$ we obtain that $v(m^*)=v(m^*|s)$ for any $s\in \B^2$. We can continue this way to show that $v$ is constant on $\B^d$, which completes the proof.
\end{proof}

The proof of Theorem \ref{thm:main} also requires two lemmas showing how the solution $u$ of \eqref{eq:PDE} changes when either player plays their optimal strategy.
\begin{lemma}\label{lem:investor}
For $(x,t)\in \R^n\times [0,1-\eps^2]$, $m\in \B^d$, and any $b\in \B$, we have
\begin{equation}\label{eq:investor}
u(x + \eps b(q(m) - \one f^*),t+\eps^2) - u(x,t) = \nabla^b_{\B^d}\H(x,t+\eps^2;m)\eps^2 + \O(\eps^3),
\end{equation}
where $f^*=f^*(x,t+\eps^2;m)$.
\end{lemma}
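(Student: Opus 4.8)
The plan is to reduce \eqref{eq:investor} to an algebraic identity by Taylor expanding $u$ about the point $(x,t+\eps^2)$ in space and time, substituting the explicit form \eqref{eq:fstar} of $f^*$, and then invoking the Poisson equation of Lemma \ref{lem:H}. Write $t'=t+\eps^2$ and $\zeta=q(m)-\one f^*$ with $f^*=f^*(x,t';m)$; since $t\in[0,1-\eps^2]$ we have $t'\in[0,1]$, so \eqref{eq:uclassical} applies at $(x,t')$ and there is no domain obstruction ($u$ is defined on all of $\R^n\times[0,1]$). First I would expand in time, $u(x,t)=u(x,t'-\eps^2)=u(x,t')-u_t(x,t')\,\eps^2+\O(\eps^4)$, and in space to second order, using $b^2=1$,
\[
u(x+\eps b\zeta,t') = u(x,t') + \eps b\langle\nabla u,\zeta\rangle + \tfrac{\eps^2}{2}\langle\nabla^2 u\,\zeta,\zeta\rangle + \O(\eps^3),
\]
with all derivatives at $(x,t')$; the remainders are uniform because $D^3u$ and $u_{tt}$ are bounded by \eqref{eq:uclassical} and $\zeta$ is bounded by \eqref{eq:expert} and \eqref{eq:uone} (which make $f^*$ bounded). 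Subtracting gives
\[
u(x+\eps b\zeta,t')-u(x,t)=\eps b\langle\nabla u,\zeta\rangle+\tfrac{\eps^2}{2}\langle\nabla^2 u\,\zeta,\zeta\rangle+u_t\,\eps^2+\O(\eps^3).
\]

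The crucial observation is that $f^*$ is tailored so that the first-order term contributes only at order $\eps^2$: from \eqref{eq:fstar}, $\langle\nabla u,\zeta\rangle=\langle\nabla u,q(m)\rangle-f^*\langle\nabla u,\one\rangle$ and the zeroth-order pieces cancel exactly, leaving
\[
\langle\nabla u,\zeta\rangle=-\tfrac{\eps}{2}\bigl(\H(x,t';m_+)-\H(x,t';m_-)\bigr),
\]
so $\eps b\langle\nabla u,\zeta\rangle=-\tfrac{\eps^2 b}{2}(\H(m_+)-\H(m_-))$. For the Hessian term, $\zeta=\xi(x,t';m)+\O(d\eps)$ since $\zeta-\xi=-\tfrac{\eps}{2}\langle\nabla u,\one\rangle^{-1}(\H(m_+)-\H(m_-))\one$ and $|\H|\le d\,\|h\|_\infty$ by \eqref{eq:H}; hence $\tfrac{\eps^2}{2}\langle\nabla^2 u\,\zeta,\zeta\rangle=\eps^2 h(x,t';m)+\O(d\eps^3)$ by \eqref{eq:h}. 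Finally the PDE \eqref{eq:PDE}, in the form $u_t=-(h)_{\B^d}$, handles the time term.

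Collecting everything,
\[
u(x+\eps b\zeta,t')-u(x,t)=\eps^2\Bigl(h-(h)_{\B^d}-\tfrac{b}{2}(\H(m_+)-\H(m_-))\Bigr)+\O(\eps^3),
\]
and Lemma \ref{lem:H} gives $h-(h)_{\B^d}=\Delta_{\B^d}\H(m)=\H(m)-\tfrac12(\H(m_+)+\H(m_-))$. Plugging in $b=1$ the bracket collapses to $\H(m)-\H(m_+)=\nabla^{+}_{\B^d}\H(m)$, and for $b=-1$ to $\H(m)-\H(m_-)=\nabla^{-}_{\B^d}\H(m)$, i.e.\ to $\nabla^b_{\B^d}\H(x,t';m)$ in both cases, which is \eqref{eq:investor}. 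I do not expect a genuine obstacle here: the content lies entirely in the two structural facts — that $f^*$ is precisely the corrector that kills the $O(\eps)$ drift $\langle\nabla u,\zeta\rangle$ at the next order, and that the leftover symmetric combination of $\H$-differences is exactly the de Bruijn graph Laplacian of Lemma \ref{lem:H} — and everything else is bookkeeping. The one point requiring care is tracking the $d$-dependence of the $\O(\eps^3)$ error, which enters linearly through $\|\H\|_\infty\le d\|h\|_\infty$ and ultimately produces the factor $d$ in Theorem \ref{thm:main}.
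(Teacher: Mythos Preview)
Your proof is correct and follows essentially the same approach as the paper: Taylor expand about $(x,t+\eps^2)$, observe that $\langle\nabla u,\zeta\rangle$ is exactly $-\tfrac{\eps}{2}(\H(m_+)-\H(m_-))$ and that $\zeta=\xi+\O(\eps)$ so the Hessian term yields $\eps^2 h$, then combine the PDE $u_t+(h)_{\B^d}=0$ with Lemma~\ref{lem:H} and simplify the resulting expression in $\H$ to $\nabla^b_{\B^d}\H$. Your version is slightly more explicit about the separate time expansion and about the $d$-dependence of the remainder, but the argument is the same.
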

\begin{proof}
Let $A$ denote the left hand side of \eqref{eq:investor}. By \eqref{eq:uclassical} we can Taylor expand to obtain
\[A=\eps^2 u_t + \eps b\langle \nabla u,q(m)-\one f^*\rangle + \frac{\eps^2}{2}\langle \nabla^2 u\, (q(m)-\one f^*),q(m)-\one f^*\rangle + \O(\eps^3),\]
where $u_t$, $\nabla u$ and $\nabla^2 u$ are evaluated at $(x,t+\eps^2)$ above. We now check that
\[q(m) - \one f^*(x,t+\eps^2;m) = \xi(x,t+\eps^2;m) + \O(\eps),\]
and
\[\langle \nabla u(x,t+\eps^2),q(m)-\one f^*(x,t+\eps^2;m)\rangle = -\frac{\eps}{2}\left( \H(x,t+\eps^2;m_+) - \H(x,t+\eps^2;m_-) \right).\]
Therefore
\[A =\left(u_t(x,t+\eps^2) +h(x,t+\eps^2;m) - \frac{b}{2}\left( \H(x,t+\eps^2;m_+) - \H(x,t+\eps^2;m_-) \right)\right) \eps^2  + \O(\eps^3).\]
By Lemma \ref{lem:H} we have $h = (h)_{\B^d} + \Delta_{\B^d}\H$. Inserting this above, and using the PDE \eqref{eq:PDE}, which is equivalent to $u_t + (h)_{\B^d}=0$, we obtain
\begin{align*}
A &= \left(\Delta_{\B^d}\H(x,t+\eps^2;m) - \frac{b}{2}\left( \H(x,t+\eps^2;m_+) - \H(x,t+\eps^2;m_-)  \right)\right)\eps^2 + \O(\eps^3)\\
&=\left( \H(x,t+\eps^2;m) -\frac{1+b}{2}\H(x,t+\eps^2;m_+) - \frac{1-b}{2}\H(x,t+\eps^2;m_-)  \right)\eps^2 + \O(\eps^3)\\
&=\left(\H(x,t+\eps^2;m) - \H(x,t+\eps^2;m|b) \right)\eps^2 + \O(\eps^3),
\end{align*}
which completes the proof.
\end{proof}
\begin{lemma}\label{lem:market}
For $(x,t)\in \R^n\times (0,1)$, $m\in \B^d$, and any $f\in [-1,1]$, we have
\begin{equation}\label{eq:market}
u(x + \eps b^*(q(m) - \one f),t+\eps^2) - u(x,t) \geq \nabla^{b^*}_{\B^d}\H(x,t+\eps;m)\eps^2 - C\eps^3,
\end{equation}
where $b^*=b^*(x,t+\eps^2;m,f)$.
\end{lemma}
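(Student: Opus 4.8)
The plan is to follow the same Taylor-expansion scheme as in the proof of Lemma \ref{lem:investor}, but now exploiting that the market strategy $b^*$ turns the first-order term into a manifestly nonnegative penalty. Write $A$ for the left-hand side of \eqref{eq:market} and set $\delta = f^*(x,t+\eps^2;m) - f$. By \eqref{eq:uclassical} I would Taylor expand $u$ about $(x,t+\eps^2)$ to get
\[A = \eps^2 u_t + \eps b^*\langle \nabla u, q(m) - \one f\rangle + \frac{\eps^2}{2}\langle \nabla^2 u\,(q(m)-\one f),\, q(m)-\one f\rangle + \O(\eps^3),\]
with all derivatives of $u$ evaluated at $(x,t+\eps^2)$. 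Exactly as in Lemma \ref{lem:investor}, the definition \eqref{eq:fstar} of $f^*$ gives $\langle \nabla u, q(m)-\one f\rangle = \delta\,\langle \nabla u,\one\rangle - \frac{\eps}{2}\big(\H(m_+)-\H(m_-)\big)$. The key observation is that by the definition \eqref{eq:bstar} of $b^*$ one has $b^*\delta = |\delta|\geq 0$ in both cases $f\leq f^*$ and $f>f^*$, so
\[\eps b^*\langle \nabla u, q(m)-\one f\rangle = \eps|\delta|\,\langle\nabla u,\one\rangle - \frac{\eps^2 b^*}{2}\big(\H(m_+)-\H(m_-)\big).\]

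For the second-order term, I would use the identity $q(m)-\one f^* = \xi + \O(\eps)$ already established in the proof of Lemma \ref{lem:investor}, so that $q(m)-\one f = \xi + \delta\one + \O(\eps)$; since $\nabla u$ and $\nabla^2 u$ are bounded by \eqref{eq:uclassical}, $\langle\nabla u,\one\rangle\geq\theta$ by \eqref{eq:uone}, and $q$ is bounded by \eqref{eq:expert}, both $\xi$ and $f^*$ are bounded, hence $|\delta|$ is bounded, and expanding gives
\[\frac{\eps^2}{2}\langle\nabla^2 u\,(q(m)-\one f),\, q(m)-\one f\rangle = h\,\eps^2 + \eps^2\delta\,\langle\nabla^2 u\,\xi,\one\rangle + \frac{\eps^2\delta^2}{2}\langle\nabla^2 u\,\one,\one\rangle + \O(\eps^3).\]
Substituting $h = (h)_{\B^d} + \Delta_{\B^d}\H$ from Lemma \ref{lem:H}, using \eqref{eq:PDE} in the form $u_t + (h)_{\B^d}=0$, and the algebraic identity $\Delta_{\B^d}\H(m) - \frac{b^*}{2}(\H(m_+)-\H(m_-)) = \H(m)-\H(m|b^*) = \nabla^{b^*}_{\B^d}\H(m)$ (the computation at the end of the proof of Lemma \ref{lem:investor}, with $b=b^*$), all terms collect to
\[A = \eps^2\,\nabla^{b^*}_{\B^d}\H(x,t+\eps^2;m) + \eps|\delta|\,\langle\nabla u,\one\rangle + \eps^2\delta\,\langle\nabla^2 u\,\xi,\one\rangle + \frac{\eps^2\delta^2}{2}\langle\nabla^2 u\,\one,\one\rangle + \O(\eps^3).\]

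Finally I would show the three explicit correction terms are $\geq -\O(\eps^3)$, which is where the choice of $b^*$ pays off: the $\eps|\delta|$ term is one order of $\eps$ larger than the two quadratic-in-$\delta$ terms, and it is nonnegative. Concretely, using $\langle\nabla u,\one\rangle\geq\theta$, boundedness of $\nabla^2 u$, and $\delta^2\leq C|\delta|$ (as $|\delta|$ is bounded),
\[\eps|\delta|\,\langle\nabla u,\one\rangle + \eps^2\delta\,\langle\nabla^2 u\,\xi,\one\rangle + \frac{\eps^2\delta^2}{2}\langle\nabla^2 u\,\one,\one\rangle \geq \eps|\delta|\,(\theta - C\eps) \geq 0\]
once $\eps\leq\theta/(2C)$. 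Hence $A\geq \eps^2\,\nabla^{b^*}_{\B^d}\H(x,t+\eps^2;m) - \O(\eps^3)$, and replacing $t+\eps^2$ by $t+\eps$ costs only $\O(\eps^3)$ by the Lipschitz bound \eqref{eq:HLip}, which yields \eqref{eq:market}. I expect the second-order term to be the main obstacle: unlike in Lemma \ref{lem:investor}, here $f$ ranges over all of $[-1,1]$, so $q(m)-\one f$ is not close to $\xi$ and the expansion produces genuine $\O(\eps^2)$ remainders rather than $\O(\eps^3)$ ones; the point is that all of these remainders are dominated by the larger, sign-definite first-order penalty $\eps|\delta|\langle\nabla u,\one\rangle$ that the market strategy $b^*$ generates.
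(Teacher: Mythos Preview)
Your argument is correct, but the paper takes a much shorter route. Rather than Taylor expanding from scratch and tracking all the $\delta$-dependent second-order corrections, the paper uses the monotonicity of $u$ in the $\one$ direction to reduce the lemma to Lemma~\ref{lem:investor} in one line: since $b^*(f^*-f)=|f^*-f|\ge 0$ and $\nabla u\cdot\one\ge\theta>0$ implies $u(y+s\one,\tau)\ge u(y,\tau)$ for $s\ge 0$, one has
\[
u(x+\eps b^*(q(m)-\one f),t+\eps^2)=u\big(x+\eps b^*(q(m)-\one f^*)+\eps|f^*-f|\,\one,\,t+\eps^2\big)\ge u(x+\eps b^*(q(m)-\one f^*),t+\eps^2),
\]
and then Lemma~\ref{lem:investor} with $b=b^*$ gives the result directly. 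In effect, the paper absorbs your entire block
\[
\eps|\delta|\langle\nabla u,\one\rangle+\eps^2\delta\langle\nabla^2 u\,\xi,\one\rangle+\tfrac{\eps^2\delta^2}{2}\langle\nabla^2 u\,\one,\one\rangle
\]
into the single global inequality $u(y+s\one,\tau)\ge u(y,\tau)$, so the ``main obstacle'' you anticipated never arises. Your approach has the virtue of being self-contained and making the penalty mechanism explicit at the level of the expansion; the paper's approach is shorter, avoids the smallness condition $\eps\le\theta/(2C)$ on $\eps$ that you need to dominate the second-order $\delta$-terms, and keeps the constant in the $O(\eps^3)$ exactly that of Lemma~\ref{lem:investor}.
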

\begin{proof}
We first note that for any $f\in [-1,1]$ we have
\[b^*(x,t+\eps^2;m,f)(f^*(x,t+\eps^2;m) - f) = |f^*(x,t+\eps^2;m)-f|.\]
Therefore $b^*(f^*-f) \geq 0$. By \eqref{eq:uone} we have $u(x+s\one,t) \geq u(x,t)$ for any $s\geq 0$, and so
\begin{align*}
u(x + \eps b^*(q(m) - \one f),t+\eps^2) &=u(x + \eps b^*q(m) - \eps\one b^*f^* + \eps\one b^*(f^*-f),t+\eps^2) \\
&\geq u(x + \eps b^*q(m) - \eps\one b^*f^*,t+\eps^2)\\
&=u(x + \eps b^*(q(m) - \one f^*),t+\eps^2).
\end{align*}
The proof is completed by applying Lemma \ref{lem:investor}.
\end{proof}

We now give the proof of Theorem \ref{thm:main}.

\begin{proof}
The proof is split into several steps.

1. Since $q(m)\in [-\mu,\mu]^n$ for all $m\in \B^d$ and $u$ satisfies \eqref{eq:uone} we have
\[\frac{\langle \nabla u(x,t),q(m)\rangle}{\langle \nabla u(x,t),\one\rangle}\in [-\mu,\mu].\]
We combine \eqref{eq:uone} with the fact that the difference $\H(x,t;m_+) - \H(x,t;m_-)$ is bounded by $Cd$ (because $\H$ is a weighted average of the bounded function $h$) to obtain
\[\frac{|\H(x,t;m_+) - \H(x,t;m_-)|}{\langle \nabla u(x,t),\one\rangle} \leq \frac{Cd}{\theta},\]
and so the strategy $f^*(x,t;m)$ given in \eqref{eq:fstar} is admissible, that is $f^*\in [-1,1]$, provided $\frac{Cd\eps}{2\theta} \leq \mu$. This is equivalent to $N\geq \frac{C^2d^2}{4\theta^2\mu^2}$ since $\eps=N^{-1/2}$.  For the remainder of the proof, we assume $N\geq \frac{C^2d^2}{4\theta^2\mu^2}$ so that $f^*\in [-1,1]$.

2.  
We claim there exists $K>0$, depending only on $u$, $n$, and $\theta$, such that for every $k\geq 1$ with $1-k\eps^2 \geq 0$ we have
\begin{equation}\label{eq:investor_bound}
u_N(x,1-k\eps^2;m) - u(x,1-k\eps^2) \leq \left(\H(x,1+\eps^2-k\eps^2;m) - \H^-(x)\right)\eps^2 + Kdk\eps^3,
\end{equation}
where $\H^-(x) = \min_{m\in \B^d}\H(x,1;m)$.
The proof is by induction. For the base case, we use \eqref{eq:dpp} to write
\begin{equation}\label{eq:dppfinal}
u_N(x,1-\eps^2;m) - u(x,1-\eps^2) =\min_{|f|\leq 1}\max_{b=\pm 1}\left\{u(x + \eps b(q(m)-\one f),1) - u(x,1-\eps^2)\right\}.
\end{equation}
Set $f=f^*(x,1;m)$ and apply Lemma \ref{lem:investor} to obtain 
\begin{align*}
u_N(x,1-\eps^2;m) - u(x,1-\eps^2) &\leq \max_{b=\pm 1}\nabla^b_{\B^d}\H(x,1;m)\eps^2 + C\eps^3\\
&\leq (\H(x,1;m) - \H^-(x))\eps^2 + C\eps^3.
\end{align*}
This establishes the base case, since $d\geq 1$. For the inductive step, assume \eqref{eq:investor_bound} holds for some $k\geq 1$, and use the dynamic programming principle \eqref{eq:dpp} and the inductive hypothesis to obtain  
\begin{align*}
&u_N(x,1-(k+1)\eps^2;m) - u(x,1-(k+1)\eps^2) \\
&= \min_{|f|\leq 1}\max_{b=\pm 1}\left\{u_N(x + \eps b(q(m)-\one f),1-k\eps^2;m|b) - u(x,1-(k+1)\eps^2)\right\}\\
&= \min_{|f|\leq 1}\max_{b=\pm 1}\Big\{u_N(x + \eps b(q(m)-\one f),1-k\eps^2;m|b) - u(x + \eps b(q(m)-\one f),1-k\eps^2)\\
&\hspace{1.2in} + u(x + \eps b(q(m)-\one f),1-k\eps^2)- u(x,1-(k+1)\eps^2)\Big\}\\
&\leq \min_{|f|\leq 1}\max_{b=\pm 1}\Big\{\left(\H(x + \eps b(q(m)-\one f),1+\eps^2-k\eps^2;m|b) - \H^-(x + \eps b(q(m)-\one f))\right)\eps^2\\
&\hspace{1.2in} + u(x + \eps b(q(m)-\one f),1-k\eps^2)- u(x,1-(k+1)\eps^2)\Big\} + Kdk\eps^3\\
&\leq \min_{|f|\leq 1}\max_{b=\pm 1}\Big\{\left(\H(x,1-k\eps^2;m|b) - \H^-(x)\right)\eps^2\\
&\hspace{1.2in} +u(x + \eps b(q(m)-\one f),1-k\eps^2)- u(x,1-(k+1)\eps^2)  \Big\} +Kdk\eps^3 + Cd\eps^3,
\end{align*}
where we used the Lipschitzness of $\H$ in the final step. We now set $f=f^*(x,1-k\eps^2;m)$ and apply Lemma \ref{lem:investor} to find that
\begin{align*}
&u_N(x,1-(k+1)\eps^2;m) - u(x,1-(k+1)\eps^2) \\
&\hspace{0.5in}\leq \max_{b=\pm 1}\Big\{ \H(x,1-k\eps^2;m|b) - \H^-(x) + \nabla^b_{\B^d}\H(x,1-k\eps^2;m)  \Big\}\eps^2 + Kdk\eps^3 + Cd\eps^3\\
&\hspace{0.5in}= \left(\H(x,1-k\eps^2;m) - \H^-(x)\right)\eps^2 + Kdk\eps^3 + Cd\eps^3.
\end{align*}
Choosing $K\geq C$ completes the proof by induction.

3. The proof is similar to part 2, so we sketch the important parts. We will show by induction that
\begin{equation}\label{eq:market_bound}
u_N(x,1-k\eps^2;m) - u(x,1-k\eps^2) \geq \left(\H(x,1+\eps^2-k\eps^2;m) - \H^+(x)\right)\eps^2 - Kdk\eps^3,
\end{equation}
where $\H^+(x) = \max_{m\in \B^d}\H(x,1;m)$. We set $b=b^*(x,1;m,f)$ in \eqref{eq:dppfinal} and apply Lemma \ref{lem:market} to obtain 
\[u_N(x,1-\eps^2;m) - u(x,1-\eps^2) \geq \nabla^{b^*}_{\B^d}\H(x,1;m)\eps^2 - C\eps^3,\]
which establishes the base case. For the inductive step, we follow the argument from part 2 to deduce
\begin{align*}
&u_N(x,1-(k+1)\eps^2;m) - u(x,1-(k+1)\eps^2) \\
&\geq \min_{|f|\leq 1}\max_{b=\pm 1}\Big\{\left(\H(x,1-k\eps^2;m|b) - \H^-(x)\right)\eps^2\\
&\hspace{1.2in} +u(x + \eps b(q(m)-\one f),1-k\eps^2)- u(x,1-(k+1)\eps^2)  \Big\} -Kdk\eps^3 - Cd\eps^3.
\end{align*}
We now set $b=b^*(x,1-k\eps^2;m,f)$ and apply Lemma \ref{lem:market} to find that
\begin{align*}
&u_N(x,1-(k+1)\eps^2;m) - u(x,1-(k+1)\eps^2) \\
&\hspace{0.5in}\geq \left(\H(x,1-k\eps^2;m|b^*) - \H^-(x) + \nabla^{b^*}_{\B^d}\H(x,1-k\eps^2;m)\right)\eps^2 - Kdk\eps^3 - Cd\eps^3\\
&\hspace{0.5in}= \left(\H(x,1-k\eps^2;m) - \H^-(x)\right)\eps^2 - Kdk\eps^3 - Cd\eps^3,
\end{align*}
which establishes the claim.

4. For any $t\in [0,1)$, we choose $k\geq 1$ so that
\[1-k\eps^2 \leq t < 1+\eps^2 - k\eps^2\]
and apply the results of parts 1 and 2.  Since $k\eps^2 < 1-t+\eps^2$, $|\H(x,t;m)|\leq Cd$, $u_N(x,1-k\eps^2;m) = u(x,t;m)$ and $|u(x,1-k\eps^2)-u(x,t)|\leq C\eps^2$, we obtain
\[|u_N(x,t;m) - u(x,t)| \leq Cd\eps^2 + Cd(1-t)\eps,\]
which completes the proof.
\end{proof}

\subsection{Lipschitz payoffs}

Our assumption that the payoff $g$ is $C^4$ in Theorem \ref{thm:main} precludes the commonly used payoff $g(x)=\max\{x_1,\dots,x_n\}$, which measures the regret with respect to the best performing expert, and is only a Lipschitz continuous payoff. Theorem \ref{thm:main} can be extended to Lipschitz payoffs, in a similar way as in \cite{drenska2020Online,drenska2019PDE,Zhu}, provided we place some additional assumptions on the payoff and expert strategies. We assume the payoff also satisfies the translation property
\begin{equation}\label{eq:translation}
g(x+s\,\one) = g(x) + s \ \ \text{for all }x\in\R^n \text{ and }s\in \R.
\end{equation}
We also make an assumption on diversity of expert strategies. We define $r:\B^d \to \R^{n-1}$ by
\begin{equation}\label{eq:rdef}
r(m)= (q_1(m)-q_n(m),\dots,q_{n-1}(m)-q_n(m)),
\end{equation}
and we assume
\begin{equation}\label{eq:ellipticity}
\frac{1}{2^{d+1}} \sum_{m\in \B^d} r(m)\otimes r(m) \geq \Cr I \ \ \text{for some } 0 < \Cr \leq 1,
\end{equation}
where $I$ is the $(n-1)\times (n-1)$ identity matrix. We recall that for symmetric matrices $A$ and $B$, the notation $A\geq B$ means that $A-B$ is positive semi-definite. 

Under these assumptions, the PDE \eqref{eq:PDE} exhibits parabolic smoothing, and even for Lipschitz payoffs $g$, the solution $u$ of \eqref{eq:PDE} is smooth. The condition \eqref{eq:ellipticity} can be interpreted exactly as the uniform ellipticity condition. We refer the reader to \cite[Theorem 4.12]{drenska2020Online}. While the smoothing is immediate when $t<1$, we lose the uniform estimates on the derivatives of $u$ as $t\to 1$. It was shown in \cite[Theorem 4.12]{drenska2020Online} that the derivatives of $u$ satisfy the estimates
\begin{equation}\label{eq:derivatives}
\left\{\begin{aligned}
|u_{\xi\xi}(x,t)| &\leq \frac{C\Lip(g)}{\sqrt{(1-t)\Cr}}, \ \ &|u_{\xi\xi\xi}(x,t)| \leq \frac{C\Lip(g)}{(1-t)\Cr},\\
|u_t(x,t)| &\leq \frac{C\Lip(g)}{\sqrt{1-t}}, \ \ \text{ and }\ \ &|u_{tt}(x,t)| \leq \frac{C\Lip(g)}{(1-t)^{3/2}},
\end{aligned}\right.
\end{equation}
for all $\xi\in \R^n$ with $|\xi|=1$ and all $(x,t)\in \R^n\times [0,1)$. Here, $\Lip(g)$ denotes the Lipschitz constant of $g$ and we use the notation $u_{\xi\xi}=\langle \nabla^2 u \xi,\xi\rangle$ and $u_{\xi\xi\xi} = \sum_{i,j,k=1}^{n} u_{x_ix_jx_k}\xi_i\xi_j\xi_k$.

Using this parabolic smoothing, we can prove the following result.
\begin{theorem}\label{thm:main2}
Let $g$ be Lipschitz continuous, and assume \eqref{eq:gone}, \eqref{eq:homogeneous}, \eqref{eq:expert}, \eqref{eq:translation}, and \eqref{eq:ellipticity} hold. Let $u$ be the solution of \eqref{eq:PDE}. Then there exists $C>0$, depending on $u$, $n$, $\theta$, and $\lambda$, such that for $N\gg 1$ we have that
\begin{equation}\label{eq:rate2}
|u_N(x,t;m) - u(x,t)| \leq Cd\eps\log|\eps|
\end{equation}
holds for all $(x,t)\in \R^n\times [0,1]$ and $m\in \B^d$, where $\eps=N^{-1/2}$.
\end{theorem}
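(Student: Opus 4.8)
The idea is to mollify $g$ and reduce to a refined, $t$-dependent version of Theorem~\ref{thm:main}. Fix a mollification scale $\delta$ comparable to $d\eps$ (with a constant depending on $\Lip(g),\theta,\mu$) and set $g_\delta=\rho_\delta*g$ for a smooth mollifier $\rho_\delta$, so that $\Lip(g_\delta)\le\Lip(g)$, $\|g-g_\delta\|_\infty\le\Lip(g)\delta$, and $\|\nabla^kg_\delta\|_\infty\le C\Lip(g)\delta^{1-k}$ for $k\ge1$. Convolution preserves \eqref{eq:gone} (because $\nabla g_\delta\cdot\one=(\nabla g\cdot\one)*\rho_\delta\ge\theta$), \eqref{eq:translation}, and \eqref{eq:ellipticity} (which does not involve $g$); homogeneity is lost, but by Remark~\ref{rem:homo} we work throughout with the non-homogeneous definition of the rescaled value function, which for the original, homogeneous $g$ coincides with \eqref{eq:uN}. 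Writing $u_N^\delta$ and $u^\delta$ for the rescaled value function and PDE solution attached to $g_\delta$, the $1$-Lipschitz dependence of iterated $\min$-$\max$ on the terminal payoff, together with the comparison principle for \eqref{eq:PDE}, gives $\|u_N-u_N^\delta\|_\infty\le\Lip(g)\delta$ and $\|u-u^\delta\|_\infty\le\Lip(g)\delta$, both $\O(d\eps)$. Hence it suffices to prove $\|u_N^\delta-u^\delta\|_\infty\le Cd\eps\,|\log\eps|$.

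Since $g_\delta$ is smooth, Lemmas~\ref{lem:H}, \ref{lem:investor}, \ref{lem:market} and the backward induction of parts~2--3 of the proof of Theorem~\ref{thm:main} apply on all of $\R^n\times[0,1]$, provided $\delta$ (comparable to $d\eps$) is chosen so that $f^{*,\delta}$ is admissible --- which is now an actual constraint, since $|\H^\delta|$ can be as large as $Cd\Lip(g)/\delta$. The only change is quantitative: for $u^\delta$ the uniform bounds \eqref{eq:uclassical} must be replaced by $\|\nabla^ku^\delta(\cdot,t)\|\le C\Lip(g)\min\{\delta^{1-k},((1-t)\Cr)^{-(k-1)/2}\}$ (and analogues for time and mixed derivatives), obtained by combining the crude smoothness bounds with the parabolic-smoothing estimates \eqref{eq:derivatives}, valid for $u^\delta$ because $g_\delta$ is Lipschitz and satisfies \eqref{eq:ellipticity}. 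Re-tracking the error in Lemma~\ref{lem:investor} with these bounds, its $\O(\eps^3)$ becomes $\O\big(d\eps^3\min\{\delta^{-2},(1-t)^{-1}\}\big)$: the dominant contributions are the third-order spatial Taylor remainder of $u^\delta$ and the $\O(\eps)$ gap between $q(m)-\one f^{*,\delta}$ and $\xi^\delta$, namely $\tfrac{\eps}{2}\langle\nabla u^\delta,\one\rangle^{-1}(\H^\delta(m_+)-\H^\delta(m_-))\,\one$, which feeds a $\|\nabla^2u^\delta\|^2$ term into the quadratic part; all remaining terms (the $u^\delta_{tt}$ and $\nabla^4u^\delta$ remainders, and the telescoping $\H^\delta$-term, whose $t$-dependence is absorbed by shifting the reference values $\H^{\delta,\pm}$ one step at a time) carry an extra power of $\eps$ and sum to $\O(d\eps)$.

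Running the induction of parts~2--3 with this per-step error, the accumulated error at time level $1-k\eps^2$ is at most $\sum_{j=1}^{k}\O\big(d\eps^3\min\{\delta^{-2},(j\eps^2)^{-1}\}\big)+\O(d\eps)$, and splitting the sum at $j\simeq\delta^2\eps^{-2}$ yields
\[
\sum_{j=1}^{\eps^{-2}}d\eps^3\min\{\delta^{-2},(j\eps^2)^{-1}\}\;\le\;d\eps^3\delta^{-2}\cdot\delta^2\eps^{-2}+d\eps\!\!\sum_{\delta^2\eps^{-2}<j\le\eps^{-2}}\!\!\tfrac1j\;\lesssim\;d\eps\,(1+|\log\delta|)\;\lesssim\;d\eps\,|\log\eps|,
\]
using $\eps\lesssim\delta$ comparable to $d\eps$ and $N\gg1$. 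Combined with $\|u_N-u_N^\delta\|_\infty+\|u-u^\delta\|_\infty\le Cd\eps$, this gives \eqref{eq:rate2}. Alternatively one can bypass mollification and run the induction directly, using that $u$ is $C^\infty$ for $t<1$ with the estimates \eqref{eq:derivatives}, provided one (i) restricts the use of $f^*$ and $b^*$ to $t\le1-K_0\eps^2$ with $K_0\simeq d^2/\mu^2$, handling $t\in[1-K_0\eps^2,1]$ by the crude bound $|u_N(x,t;m)-u(x,t)|\le C\Lip(g)K_0\eps$ (from $|u_t|\lesssim(1-t)^{-1/2}$, the Lipschitz continuity of $g$, and letting the investor play a frozen weighted average), and (ii) uses time-dependent reference values in the induction.

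The main obstacle is the bookkeeping near $t=1$, where \eqref{eq:derivatives} degenerates: one must verify that among all error terms, the only one that fails to gain an extra factor of $\eps$ over the $\eps^2$ scale of the leading term is the one of size $\eps^3/(1-t)$ --- coming from $\nabla^3u^\delta$ and from the first-order correction to $\xi^\delta$ --- so that summing it over $\O(\eps^{-2})$ time steps produces a logarithm rather than a negative power of $\eps$; and one must check that $f^{*,\delta}$ (equivalently the strategy \eqref{eq:fstar}) stays admissible on all of $[0,1]$, which forces $\delta$ to be of order $d\eps$ and hence $|\log\delta|\sim|\log\eps|$. Together these are exactly what produce the logarithmic loss in \eqref{eq:rate2}, and confirming them is the crux. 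A secondary technical point is checking that the $\H^\delta$-terms still telescope once the reference values are allowed to depend on time.
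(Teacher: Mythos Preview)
Your proposal is correct and considerably more detailed than the paper's own treatment: the paper \emph{omits} the proof of Theorem~\ref{thm:main2} entirely, giving only the one-line indication that one re-runs the argument of Theorem~\ref{thm:main} while tracking how the $\O(\eps^3)$ error terms degenerate via the parabolic-smoothing estimates \eqref{eq:derivatives}, and deferring to \cite{Zhu,drenska2019PDE,drenska2020Online} for analogous computations.

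That hint corresponds to your \emph{alternative} route (direct induction with time-dependent reference values and a short boundary layer near $t=1$). Your \emph{primary} route --- mollifying $g$ at scale $\delta\sim d\eps$ and using the hybrid bound $\|\nabla^k u^\delta(\cdot,t)\|\lesssim\min\{\delta^{1-k},((1-t)\Cr)^{-(k-1)/2}\}$ --- is a genuinely different decomposition. Its main advantage is that admissibility of $f^{*,\delta}$ holds uniformly on $[0,1]$ once $\delta$ is chosen, so no separate boundary-layer argument is needed; the direct route must instead carve out $t\in[1-K_0\eps^2,1]$ with $K_0\sim d^2/\mu^2$ to keep $f^*$ admissible, and one has to be careful that the resulting $O(d^2\eps)$ boundary contribution is absorbed (it is, since the constant $C$ in \eqref{eq:rate2} is allowed to depend on $u$, which depends on $d$). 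Either route produces the same dominant per-step error $O\!\big(d\eps^3\min\{\delta^{-2},(1-t)^{-1}\}\big)$ from the third spatial derivative and the $\H$-corrector, whose harmonic sum over $\O(\eps^{-2})$ steps yields the logarithm. Your identification of this as the sole term that fails to gain an extra $\eps$ is exactly the crux the paper alludes to.
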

The proof of Theorem \ref{thm:main2} follows closely that of Theorem \ref{thm:main}, except that we must keep track of how the error terms change due to the blow-up of the derivative estimates \eqref{eq:derivatives} as $t\to 1$. The proof is very similar to several results that were established in previous work (see \cite[Theorem 2]{Zhu}, \cite[Theorem 3.3]{drenska2019PDE}, \cite[Theorem 1.2]{drenska2020Online}), so we omit the details.

\section{The market and investor linear programs} 
\label{sec:LP}

We now make the connection between the optimal investor strategy $f^*$ defined in \eqref{eq:fstar} and the linear programs identified in \cite{drenska2019PDE}. Let us first recall from \cite{drenska2019PDE} how to arrive at the market and investor linear programs. We replace $u_N(x,t;m)$ by a smooth function $u(x,t)$  in the dynamic programming principle \eqref{eq:dpp} to obtain
\begin{equation}\label{eq:dppu}
\min_{|f|\leq 1}\max_{b=\pm 1}\left\{u(x + \eps b(q(m)-\one f),t+\eps^2) - u(x,t)\right\} = 0.
\end{equation}
We use the \emph{ansatz} \eqref{eq:ansatz} for the investor's choice $f$, which sets
\[\langle \nabla u,q(m)-f\one\rangle = -\frac{\eps}{2}f^\#.\]
Then we Taylor expand $u$ in \eqref{eq:dppu}, as in the proof of Lemma \ref{lem:investor}, and use the ansatz for $f$ to obtain
\begin{equation}\label{eq:dpp1step}
u_t + \min_{f^\#}\max_{b=\pm 1}\left\{h(m) - \frac{b}{2}f^\#(m)\right\} = O(\eps),
\end{equation}
where $h$ is defined in \eqref{eq:h}, and we are suppressing the dependence on $(x,t)$ everywhere. Of course, these computations are not rigorous, and not even approximately correct, since as written in \eqref{eq:dpp1step}, the optimal choice for the market would be $b=-\text{sign}(f^\#(m))$ and so the optimal investor's optimal choice would appear to be $f^\#=0$. This indicates that one step of the dynamic programming principle is insufficient, and the players will be choosing their strategies with more than one step of the game in mind. This idea was explored in \cite{drenska2020Online} where the authors used a $k$-step dynamic programming principle and took $k\to \infty$ as $N\to \infty$.

In contrast, the approach used in \cite{drenska2019PDE} considers multiple steps of \eqref{eq:dpp1step}, which results in accumulated regret of 
\begin{equation}\label{eq:kregret}
\sum_{i=1}^k \left(h(m^i) - \frac{b_i}{2}f^\#(m^i)\right)
\end{equation}
over $k$ steps, where $m^1,m^2,\dots,m^k\in \B^d$ is the path taken on the de Bruijn graph, which is determined by the market's choices of $b_i$ as $m^{i+1}=m^i|b_i$. Since the investor and market must be looking over multiple steps to choose their strategies, a reasonable approach for the investor is to choose $f^\#:\B^d\to \R$ so as to minimize \eqref{eq:kregret} over the worst case path $m^1,m^2,\dots,m^k$, which is what the market should select if they are playing optimally. To eliminate the market's choices $b_i$ from the problem, \cite{drenska2019PDE} used the fact that any closed walk on the de Bruijn graph can be decomposed into simple cycles, and instead minimized \eqref{eq:kregret} over all cycles on the de Bruijn graph. This leads to the investor's linear program, which is to find $M_{investor}\in \R$ and $f^\#:\B^d\to \R$ so as to minimize $M_{investor}$ subject to
\begin{equation}\label{eq:investorLP}
\frac{1}{|C|}\sum_{m\in C} \left(h(m) - \frac{b(m)}{2}f^\#(m)\right) \leq M_{investor}
\end{equation}
for all simple cycles $C$ on the de Bruijn graph, where $b(m)\in \B=\{-1,1\}$ denotes the outgoing edge from $m$ in the cycle. The corresponding market linear program is to maximize $M_{market}$ subject to
\begin{equation}\label{eq:marketLP}
\frac{1}{|C|}\sum_{m\in C} \left(h(m) - \frac{b(m)}{2}f^\#(m)\right) \geq M_{market}
\end{equation}
for all cycles $C$. The market and investor linear programs are not dual linear programs, though the setup has a similar flavor. It was shown in \cite{drenska2019PDE} (see Lemma 4.1 and 4.2) that the linear programs have solutions and 
\begin{equation}\label{eq:bounds}
M_{market}\leq (h)_{\B^d} \leq M_{investor}.
\end{equation}
The proof of this uses the fact that the de Bruijn graph is Eulerian, and so there is a closed walk on the graph visiting each edge exactly once. By explicitly solving the linear programs by hand, it was shown in \cite{drenska2019PDE} that $M_{market}=(h)_{\B^d}=M_{investor}$ for $n=2$ and $d\leq 4$, which the authors called achieving \emph{indifference}. We note that the linear programs also depend on $(x,t)$ and thus their solutions are changing (slowly) over the course of the game. When $n=2$, the dependence on $(x,t)$ can be factored out of linear program to simplify the situation (see \cite{drenska2019PDE}), though this seems to be possible only for $n=2$ experts.

We now show that the corrector $f^\#(m)=\H(m_+)-\H(m_-)$ used in this paper,  solves both the market and investor linear programs and achieves indifference. Indeed, using that  $\H$ solves the Poisson equation $\Delta_{\B^d}\H = h - (h)_{\B^d}$ we simply compute
\begin{align*}
h(m) - \frac{b(m)}{2}f^\#(m)&=h(m) - \frac{b(m)}{2}(\H(m_+)-\H(m_-))\\
&= (h)_{\B^d} + \Delta_{\B^d}\H(m)- \frac{b(m)}{2}(\H(m_+)-\H(m_-))\\
&=(h)_{\B^d} + \H(m) - \frac{1}{2}(\H(m_+) + \H(m_-))- \frac{b(m)}{2}(\H(m_+)-\H(m_-))\\
&=(h)_{\B^d} + \H(m) - \left(\frac{1+b(m)}{2}\right)\H(m_+) - \left(\frac{1-b(m)}{2}\right)\H(m_-)\\
&=(h)_{\B^d} + \H(m) - \H(m|b(m)).
\end{align*}
Notice the computation above is essentially the same as the main part of the proof of Lemma \ref{lem:investor}. When the quantity above is summed over a cycle on the de Bruijn graph, the terms $\H(m) - \H(m|b(m))$ contribute to a telescoping sum and exactly cancel out, yielding
\[\sum_{m\in C}\left(h(m) - \frac{b(m)}{2}f^\#(m)\right) = |C| (h)_{\B^d}\]
for any cycle $C$. We can use the choice $f^\#(m)=\H(m_+)-\H(m_-)$ in both the market and investor linear programs to show that $M_{market}\geq (h)_{\B^d}$ and $M_{investor}\leq (h)_{\B^d}$. Combining this with \eqref{eq:bounds} we find that $M_{market}=M_{investor}$ and so indifference is always achieved.

Given that we have found solutions of the linear programs from \cite{drenska2019PDE} achieving indifference, we could have used the same proof strategy as in \cite{drenska2019PDE}, or quoted their proofs directly. However, the observation that the solutions to the linear program produce a telescoping sum, as above, leads to substantial simplifications in the proofs. In particular, in the proof of Theorem \ref{thm:main} we have no need to consider cycles on the de Bruijn graph, and our market strategy is simpler than the one presented in \cite{drenska2019PDE}, which is split into several cases.

\subsection{General directed graphs}
\label{sec:general}

The solution $f^\#=\H(m_+)-\H(m_-)$ of the market and investor linear programs has a general form, depending on the solution of a Poisson equation over the de Bruijn graph. It is natural to ask whether these linear programs can be formulated and solved over more general directed graphs, or whether there is some structure in the de Bruijn graph that was essentially used in some way. In this section, we show that the ideas generalize quite naturally to other directed graphs, with the key requirements being that the graph admits an Eulerian cycle and the outgoing degree of each node is $2$. 

Let $G=(V,E)$ be a directed graph with vertex set $V$ consisting of $|V|$ nodes and edge set $E \subset V^2$. The edge set describes the set of directed edges, so each $e=(e_1,e_2) \in E$ defines a directed edge from $e_1\in V$ to $e_2\in V$. For each $x\in V$ we define the sets of incoming edges $I_x$ and outgoing edges $O_x$ by
\[I_x = \{e \in E \, : \, e_2 = x\} \ \ \text{ and }\ \ O_x=\{e \in E \, : \, e_1 = x\}.\]
We denote by $|I_x|$ and $|O_x|$ the cardinality of these sets.  A \emph{path} $P$ is a sequence of edges $e^1,e^2,\dots,e^m$ such that $e^i_2=e^{i+1}_1$ for $i=1,\dots,m-1$. The path connects $x\in V$ to $y\in V$ if $e^1_1=x$ and $e^m_2=y$. We say the graph is \emph{strongly connected} if there is a path connecting every $x,y\in V$. A \emph{cycle} $C$ is a path for which $e^{m}_2=e^1_1$. We denote by $|C|$ the number of edges in the cycle. An \emph{Eulerian cycle} is a cycle $C$ that traverses every edge $e\in E$ exactly once. If $G$ admits an Eulerian cycle, then it is strongly connected.

Let us denote by $\ell^2(V)$ the set of functions $u:V\to \R$. For a function $u\in\ell^2(V)$ we define the gradient
\begin{equation}\label{eq:gradV}
\nabla_V u(e) = u(e_1) - u(e_2)
\end{equation}
and the graph Laplacian
\begin{equation}\label{eq:LapV}
\Delta_V u(x) = \frac{1}{|O_x|}\sum_{e\in O_x}\nabla_V u(e).
\end{equation}
We also write the average of $u$ over $V$ as
\[(u)_V = \frac{1}{|V|} \sum_{x\in V }u(x).\]

We assume each node in the graph has two outgoing edges, so 
\begin{equation}\label{eq:two}
|O_x|=2 \ \ \text{ for all }x\in V.
\end{equation}
If the graph has an Eulerian cycle, then $|I_x|=|O_x|$ and so $|I_x|=2$ as well. In fact, by Euler's theorem, if $|I_x|=|O_x|$ for all $x\in V$ and $G$ is strongly connected, then $G$ has an Eulerian cycle. We also assume the two outgoing edges of each node are assigned $+1$ and $-1$ weights, as in the case of the de Bruijn graph. This defines a function $b:E \to \B$ that satisfies \begin{equation}\label{eq:bavg}
\sum_{e\in O_x}b(e) = 0 \ \ \text{ for all }x\in V.
\end{equation}
 We also assume we are given a function $h:V\to \R$, which can be interpreted as some base cost of traversing each node in the graph. The linear program generalizing the investor's linear program \eqref{eq:investorLP} is to find $f_1:V\to \R$ and $M_1$ to minimize $M_1$ subject to
\begin{equation}\label{eq:LP1}
 \frac{1}{|C|}\sum_{e\in C} \left(h(e_1) + b(e)f_1(e_1)\right) \leq M_1 \ \ \text{ for all cycles } C.
\end{equation}
The linear program generalizing the market's linear program \eqref{eq:marketLP} is to find $f_2:V\to \R$ and $M_2$ to maximize $M_2$ subject to 
\begin{equation}\label{eq:LP2}
 \frac{1}{|C|}\sum_{e\in C} \left(h(e_1) + b(e)f_2(e_1)\right) \geq M_2 \ \ \text{ for all cycles } C.
\end{equation}

In this section, we show that when $G$ admits an Eulerian cycle, the solutions to both linear programs \eqref{eq:LP1} and \eqref{eq:LP2} are given by
\begin{equation}\label{eq:sol}
f(x) = \frac{1}{2}\sum_{e\in O_x}b(e)\nabla_V \H(e),
\end{equation}
where $\H:V\to \R$ is the solution of the Poisson equation
\begin{equation}\label{eq:poissonGen}
\Delta_V \H = h - (h)_V \ \ \ \text{ on }V.
\end{equation}

The proof of this is straightforward, once we establish the existence of $\H$.
\begin{lemma}\label{lem:poissonV}
Assume $G$ admits an Eulerian cycle and \eqref{eq:two} holds. Then there exists $\H:V\to \R$ solving \eqref{eq:poissonGen}, and $\H$ is unique up to a constant.
\end{lemma}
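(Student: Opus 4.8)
The statement to prove is Lemma~\ref{lem:poissonV}: existence and uniqueness (up to a constant) of a solution $\H:V\to\R$ of the graph Poisson equation $\Delta_V\H = h - (h)_V$ on a directed graph $G$ that admits an Eulerian cycle and satisfies $|O_x|=2$ for all $x$. The approach is standard finite-dimensional linear algebra: view $\Delta_V$ as a linear operator on the finite-dimensional space $\ell^2(V)$ and show that its range is exactly the mean-zero subspace $\{g : (g)_V = 0\}$, while its kernel is exactly the constants. Since the right-hand side $h-(h)_V$ has mean zero by construction, this gives existence, and the kernel computation gives uniqueness up to a constant.

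\medskip

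\textbf{Key steps.} First I would compute the kernel: suppose $\Delta_V v = 0$. Since $|O_x|=2$, the equation $\Delta_V v(x) = 0$ says $v(x) = \tfrac12\sum_{e\in O_x} v(e_2)$, i.e.\ $v$ satisfies a mean-value property along outgoing edges. Taking $x^*$ a point where $v$ attains its maximum over $V$ forces $v(e_2) = v(x^*)$ for both outgoing edges $e\in O_{x^*}$; iterating this propagates the maximum value along every directed path out of $x^*$. Because $G$ admits an Eulerian cycle it is strongly connected, so every vertex is reachable by a directed path from $x^*$; hence $v$ is constant. Thus $\ker\Delta_V$ is exactly the one-dimensional space of constants. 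Second, for the range: the key identity is that $\Delta_V$ has mean-zero range, i.e.\ $(\Delta_V u)_V = 0$ for every $u$. This is where the Eulerian hypothesis (equivalently $|I_x|=|O_x|=2$ for all $x$, via Euler's theorem as noted in the excerpt) is used: writing
\[
\sum_{x\in V}\Delta_V u(x) = \sum_{x\in V}\frac{1}{2}\sum_{e\in O_x}\bigl(u(e_1)-u(e_2)\bigr) = \frac{1}{2}\sum_{e\in E}\bigl(u(e_1)-u(e_2)\bigr),
\]
and then regrouping the sum over edges by their \emph{target} vertex, $\sum_{e\in E} u(e_2) = \sum_{x\in V}|I_x|\,u(x) = \sum_{x\in V}2\,u(x)$, while by source vertex $\sum_{e\in E}u(e_1) = \sum_{x\in V}|O_x|\,u(x) = \sum_{x\in V}2\,u(x)$; these cancel, giving $(\Delta_V u)_V = 0$. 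Hence $\range(\Delta_V) \subseteq \{g:(g)_V=0\}$. By the rank–nullity theorem, $\dim\range(\Delta_V) = |V| - \dim\ker\Delta_V = |V|-1 = \dim\{g:(g)_V=0\}$, so the inclusion is an equality. Third, I conclude: $h-(h)_V$ lies in $\{g:(g)_V=0\} = \range(\Delta_V)$, so a solution $\H$ exists; and if $\H,\bar\H$ both solve the equation then $\Delta_V(\bar\H-\H)=0$, so $\bar\H-\H$ is constant, giving uniqueness up to a constant. (Alternatively, one can present the kernel computation exactly as in the proof of Lemma~\ref{lem:H}, which handles precisely the de Bruijn special case.)

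\medskip

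\textbf{Main obstacle.} There is no serious obstacle — everything is finite-dimensional linear algebra — but the one place where care is genuinely needed is the range computation, specifically making sure the Eulerian (equivalently $|I_x|=|O_x|$) hypothesis is invoked correctly. Without balance of in- and out-degrees, $\Delta_V$ need not have mean-zero range, and the dimension count fails; so the cleanest exposition flags that $|O_x|=2$ together with the existence of an Eulerian cycle forces $|I_x|=2$, and that this degree balance is exactly what makes the two edge-sums above cancel. The maximum-principle argument for the kernel is routine and relies only on strong connectivity, which the Eulerian hypothesis also supplies.
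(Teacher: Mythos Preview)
Your proposal is correct and follows essentially the same argument as the paper: a maximum-principle computation of $\ker\Delta_V$ via strong connectivity, the edge-sum identity $(\Delta_V u)_V=0$ using $|I_x|=|O_x|$, and rank--nullity to identify $\range(\Delta_V)$ with the mean-zero subspace. The structure, key steps, and use of hypotheses all match.
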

\begin{proof}
First, we claim that the kernel of $\Delta_V$ is exactly the constant functions on $V$, that is
\begin{equation}\label{eq:ker}
\ker(\Delta_V) = \{u\in \ell^2(V) \, : \, u(x)=u(y) \text{ for all }x,y\in V\}.
\end{equation}
To see this, let $u\in \ell^2(V)$ with $\Delta_V u =0$. Then we have
\begin{equation}\label{eq:sumgrad}
\sum_{e \in E_x}\nabla_V u(e) = 0
\end{equation}
for all $x\in V$. Let $x_0\in V$ be a node where $u$ attains its maximum value over $V$. Then $\nabla_V u(e) \geq 0$ for all $e\in E_x$. Combining this with \eqref{eq:sumgrad} we have that $\nabla_V u(e)=0$ for all $e\in E_x$, and therefore $u$ also attains its maximum at all nodes $y\in V$ that are forward adjacent to $x$; that is all $y\in V$ such that $(x,y)\in E_x$. Since the graph $G$ is connected, we find that $u$ is constant on $V$, which establishes \eqref{eq:ker}.

We now claim that
\begin{equation}\label{eq:range}
\range(\Delta_V) =\{u\in \ell^2(V) \, : \, (u)_V =0\}. 
\end{equation}
To see this, we first note that by the rank-nullity theorem the dimension of $\range(\Delta_V)$ is $|V|-1$, since the kernel of $\Delta_V$ is one-dimensional. Therefore, we only need to show that 
\[\range(\Delta_V) \subset \{u\in \ell^2(V) \, : \, (u)_V =0\}.\]
to establish the claim, since the right hand side has dimension $|V|-1$. To prove this inclusion, let $u\in \ell^2(V)$ and compute
\begin{align*}
(\Delta_V u)_V &= \frac{1}{|V|}\sum_{x\in V}\Delta_V u(x)\\
&=\frac{1}{2|V|}\sum_{x\in V}\sum_{e\in E_x}\nabla_V u(e)\\
&=\frac{1}{2|V|}\sum_{e\in E}(u(e_1) - u(e_2))\\
&=\frac{1}{2|V|}\sum_{e\in E}u(e_1) - \frac{1}{2|V|}\sum_{e\in E}u(e_2)\\
&=\frac{1}{2|V|}\sum_{x\in V}|O_x|u(x) - \frac{1}{2|V|}\sum_{x\in V}|I_x|u(x)\\
&=\frac{1}{2|V|}\sum_{x\in V}(|O_x| - |I_x|)u(x).
\end{align*}
Since $G$ has an Eulerian cycle we have $|I_x|=|O_x|=2$, and so $(\Delta_v u)_V = 0$, which establishes the claim.

Since $h-(h)_V$ belongs to $\range(\Delta_V)$, there exists $\H$ such that $\Delta_V \H = h-(h)_V$. By the characterization of the kernel \eqref{eq:ker}, the solution is unique up to a constant.
\end{proof}

We now establish that the values of the linear programs agree and \eqref{eq:sol} is the solution to both linear programs.
\begin{theorem}\label{thm:gen}
Assume $G$ admits an Eulerian cycle, and that \eqref{eq:two} holds. Then \eqref{eq:sol} solves both linear programs \eqref{eq:LP1} and \eqref{eq:LP2}, and 
\begin{equation}\label{eq:}
M_1 = (h)_V = M_2.
\end{equation}
\end{theorem}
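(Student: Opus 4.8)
The plan is to mirror the telescoping-sum computation already carried out in Section~\ref{sec:LP} for the de Bruijn graph, but now in the abstract setting of a directed graph $G$ with an Eulerian cycle. First I would substitute the candidate $f$ from \eqref{eq:sol} into the cycle constraint. Since $|O_x|=2$ and the two outgoing weights are $+1$ and $-1$, for each $x\in V$ the expression $\frac{1}{2}\sum_{e\in O_x} b(e)\nabla_V\H(e)$ expands to $\H(x) - \frac{1}{2}\sum_{e\in O_x}\H(e_2)$, which is exactly $\Delta_V\H(x)$. Hence $h(e_1) + b(e)f(e_1)$, when $e$ is the outgoing edge actually taken at $e_1$ inside a cycle $C$, equals $h(e_1) + \tfrac{b(e)}{2}\sum_{e'\in O_{e_1}}b(e')\nabla_V\H(e')$; using $b(e)^2=1$ and that the ``other'' outgoing edge $\tilde e$ satisfies $b(\tilde e)=-b(e)$, this collapses to $h(e_1) + \H(e_1) - \H(e_2) = (h)_V + \Delta_V\H(e_1) + \H(e_1) - \H(e_2)$. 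By the Poisson equation \eqref{eq:poissonGen}, $(h)_V + \Delta_V\H(e_1) = h(e_1)$, so I get the clean identity
\[
h(e_1) + b(e)f(e_1) = (h)_V + \H(e_1) - \H(e_2).
\]

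**Next I would** sum this identity over all edges $e$ of an arbitrary cycle $C$. Because $C$ is a cycle, the terms $\H(e_1)-\H(e_2)$ telescope around the loop and cancel exactly, leaving
\[
\frac{1}{|C|}\sum_{e\in C}\bigl(h(e_1) + b(e)f(e_1)\bigr) = (h)_V
\]
for \emph{every} cycle $C$. This single equation shows that the choice \eqref{eq:sol} is feasible for both \eqref{eq:LP1} and \eqref{eq:LP2} with the common value $(h)_V$: in \eqref{eq:LP1} it certifies $M_1 \le (h)_V$, and in \eqref{eq:LP2} it certifies $M_2 \ge (h)_V$.

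**For the matching bounds** $M_1 \ge (h)_V$ and $M_2 \le (h)_V$, I would use the Eulerian cycle. Let $C^*$ be an Eulerian cycle, so $|C^*| = |E| = 2|V|$ (using $|O_x|=2$ for all $x$) and each vertex $x$ appears as the tail $e_1$ of exactly $|O_x|=2$ edges of $C^*$, one with $b(e)=+1$ and one with $b(e)=-1$ by \eqref{eq:bavg}. Therefore $\sum_{e\in C^*} b(e)f(e_1) = \sum_{x\in V} f(x)\sum_{e\in O_x}b(e) = 0$ for \emph{any} function $f:V\to\R$, and $\sum_{e\in C^*} h(e_1) = \sum_{x\in V} 2\,h(x) = 2|V|\,(h)_V = |C^*|\,(h)_V$. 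So for any feasible $(f_1,M_1)$ of \eqref{eq:LP1}, averaging its constraint over $C=C^*$ gives $(h)_V \le M_1$; likewise any feasible $(f_2,M_2)$ of \eqref{eq:LP2} satisfies $(h)_V \ge M_2$. Combining with the previous paragraph yields $M_1 = (h)_V = M_2$, and \eqref{eq:sol} attains both.

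**The only real obstacle** is bookkeeping the sign and weight conventions correctly — in particular verifying that $\frac12\sum_{e\in O_x}b(e)\nabla_V\H(e) = \Delta_V\H(x)$ given \eqref{eq:bavg} and \eqref{eq:LapV}, and that the ``other outgoing edge'' argument in the telescoping step is valid (it is, because in a cycle each visited vertex is left along exactly one of its two outgoing edges, so $b(e)$ there is well-defined and its square is $1$). Everything else is the same telescoping/Eulerian-averaging argument already used for $\B^d$; existence of $\H$ is supplied by Lemma~\ref{lem:poissonV}. I expect no genuine difficulty beyond these routine verifications.
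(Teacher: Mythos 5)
Your plan is the same as the paper's: derive the pointwise identity $h(e_1)+b(e)f(e_1)=(h)_V+\nabla_V\H(e)$, telescope it over an arbitrary cycle to get $M_1\le(h)_V\le M_2$, and then average any feasible pair over the Eulerian cycle (using $\sum_{e\in O_x}b(e)=0$ and $|C^*|=2|V|$) to get the reverse inequalities. Steps two and three are carried out correctly and match the paper.

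However, the derivation of the key identity has two false intermediate claims. First, $\tfrac12\sum_{e\in O_x}b(e)\nabla_V\H(e)$ does \emph{not} equal $\H(x)-\tfrac12\sum_{e\in O_x}\H(e_2)=\Delta_V\H(x)$: the left side carries the $b(e)$ weights, the right side does not, and in fact $f(x)=\tfrac12\bigl(\H(e_2^-)-\H(e_2^+)\bigr)$ while $\Delta_V\H(x)=\H(x)-\tfrac12\bigl(\H(e_2^+)+\H(e_2^-)\bigr)$. Second, the ``collapse'' to $b(e)f(e_1)=\nabla_V\H(e)$ is also false; with $\tilde e$ the other outgoing edge one gets $b(e)f(e_1)=\tfrac12\bigl(\nabla_V\H(e)-\nabla_V\H(\tilde e)\bigr)=\nabla_V\H(e)-\Delta_V\H(e_1)$. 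The extra $-\Delta_V\H(e_1)$ is exactly what cancels against the $+\Delta_V\H(e_1)$ coming from the Poisson substitution $h(e_1)=(h)_V+\Delta_V\H(e_1)$, which is why your \emph{final} identity $h(e_1)+b(e)f(e_1)=(h)_V+\nabla_V\H(e)$ is nonetheless correct, but the chain of equalities you wrote does not actually establish it (and is at one point circular). The paper avoids this by combining the two sums into $\tfrac12\sum_{e'\in O_{e_1}}(1+b(e)b(e'))\nabla_V\H(e')$ and noting the coefficient is $2$ for $e'=e$ and $0$ for $e'=\tilde e$; adopting that computation (or simply writing $b(e)f(e_1)=\nabla_V\H(e)-\Delta_V\H(e_1)$ explicitly) fixes the step, after which everything else you wrote goes through.
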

\begin{proof}
Let $f$ be given by \eqref{eq:sol} and note that
\begin{align*}
h(e_1) + b(e)f(e)&=(h)_V + \Delta_V \H(e_1) + \frac{b(e)}{2}\sum_{e'\in O_x}b(e')\nabla_V \H(e') \\
&=(h)_V + \frac{1}{2}\sum_{e\in O_x}\nabla_V \H(e) +\frac{1}{2}\sum_{e'\in O_x}b(e)b(e')\nabla_V \H(e') \\
&=(h)_V + \frac{1}{2}\sum_{e'\in O_x}(1 + b(e)b(e'))\nabla \H(e')\\
&=(h)_V + \nabla_V \H(e).
\end{align*}
Therefore, for any cycle $C$ we have
\[\sum_{e\in C}(h(e_1) + b(e)f(e)) = |C|(h)_V + \sum_{e\in C}\nabla_V \H(e) = |C|(h)_V.\]
It follows that $M_1 \leq (h)_V \leq M_2$.

To prove equality, let $C$ be an Eulerian cycle. Then for any $f:V\to \R$ we have
\begin{align*}
\sum_{e\in C}(h(e_1) + b(e)f(e_1))&=\sum_{x\in V}\sum_{e \in O_x}(h(x) + b(e)f(x))\\
&= 2\sum_{x\in V}h(e_1) + \sum_{x\in V} (f(x) - f(x)) = 2|V|(h)_V.
\end{align*}
Since $|C|=2|V|$ we see that
\[\frac{1}{|C|}\sum_{e\in C}(h(e_1) + b(e)f_1(e_1)) = (h)_V.\]
It follows that $M_1 \geq (h)_V$ and $M_2 \leq (h)_V$, which completes the proof.
\end{proof}

\begin{remark}
It seems non-trivial to generalize these ideas to more than a dichotomy of choices at each node. That is, the ideas in this section do not immediately generalize to $|O_x|\geq 3$, even if the outgoing and incoming degrees are the same at all nodes. We leave this as an interesting problem for future work.
\label{rem:extension}
\end{remark}

\section{Conclusion}
\label{sec:con}

We established $O(\eps)$ optimal strategies for online prediction with history dependent experts. The optimal strategies involve solving a Poisson equation over the de Bruijn graph, which should be interpreted as the corresponding Bellman (or Isaacs) equation in the discrete setting. We also checked that our optimal strategies solve the market and investor linear programs from \cite{drenska2019PDE}, which resolves the open problems from that paper, and we generalized the linear programs and their solutions to other directed graphs.

\bibliographystyle{abbrv}
\bibliography{ref}

\end{document}